\documentclass[11pt]{article}

\usepackage{epsfig,amstext,amsmath,amsfonts,amssymb,float}
\usepackage{graphicx}
\usepackage{fancybox,array}

\topmargin=-1cm \textheight=22cm
\oddsidemargin=0.35cm \textwidth=15.8cm

\newcommand{\disp}{\displaystyle}
\newcommand{\real}{\mathbb{R}}

\newcommand{\complex}{\mathbb{C}}

\newcommand{\ffi}{\varphi}
\newcommand{\rn}{\real^N}

\newcommand{\intrn}{\int_{\rn}}

\newcommand{\wto}{\rightharpoonup}

\newcommand{\wt}{\widetilde}
\newcommand{\diff}{\,\mathrm{d}}

\newcommand{\s}{\sigma}
\newcommand{\e}{\mathrm{e}}
\newcommand{\x}{\times}
\newcommand{\sm}{\setminus}

\newtheorem{theorem}{Theorem}
\newtheorem{lemma}{Lemma}
\newtheorem{proposition}{Proposition}
\newtheorem{corollary}{Corollary}
\newtheorem{remark}{Remark}

\newenvironment{proof}
{\hspace{-3mm}{\it Proof.}}{\nolinebreak\hfill$\Box$}

\numberwithin{equation}{section}

%%%%%%%%%%%%%%%%%%%%%%%%%%%%%%%%%%%%%%%%%%%%%

\begin{document}

\title{An inhomogeneous, $L^2$ critical, nonlinear Schr\"odinger equation}

\author{Fran\c cois Genoud \\
\small{\it Department of Mathematics and}\\ 
\small{\it The Maxwell Institute for Mathematical Sciences}\\
\small{\it Heriot-Watt University}\\
\small{\it Edinburgh, EH14 4AS, Scotland}\\
\small{F.Genoud@hw.ac.uk}}

\maketitle

\begin{abstract}
An inhomogeneous nonlinear Schr\"odinger equation is considered, that is invariant under $L^2$ scaling. The sharp condition for global existence of $H^1$ solutions is established, involving the $L^2$ norm of the ground state of the stationary equation. Strong instability of standing waves is proved by constructing self-similar solutions blowing up in finite time. 
\end{abstract}

{\em This paper will appear in the 	
Zeitschrift f\"ur Analysis und ihre Anwendungen.}

\section{Introduction}

The purpose of this note is to point out the case of an inhomogeneous nonlinear 
Schr\"odinger equation having $L^2$ scaling invariance. Namely, we consider the Cauchy problem
\begin{equation}\label{nls}\tag{NLS}
i\partial_t\phi + \Delta\phi + |x|^{-b}|\phi|^{2\s}\phi = 0, \quad 
\phi(0,\cdot)=\phi_0\in H^1(\rn)
\end{equation}
with $\s=(2-b)/N$, in any dimension $N\ge1$. Here and henceforth, $H^1(\rn)$ denotes the Sobolev space of complex-valued functions $H^1(\rn,\complex)$, with its usual norm. We suppose that $0<b<\min\{2,N\}$. The case $b=0$ is the classical (focusing) nonlinear Schr\"odinger equation with $L^2$ critical nonlinearity. In the above setting, it turns out that \eqref{nls} is also invariant under the $L^2$ scaling
\begin{equation}\label{scaling}
\phi\to\phi_\lambda(t,x):=\lambda^{N/2}\phi(\lambda^2 t,\lambda x), \quad
\phi_0\to(\phi_0)_\lambda(x):=\lambda^{N/2}\phi_0(\lambda x) \quad
\text{for} \ \lambda>0.
\end{equation}

We came across this (modified) critical nonlinearity for \eqref{nls} while studying stability
of standing waves for some classes of nonlinear Schr\"odinger equations, where 
\eqref{nls} arises both as a model and a limiting case, see \cite{gs,these,g}. In particular, the Cauchy problem \eqref{nls} is studied there, and it is found that, for
$0<b<\min\{2,N\}$, it is well-posed in $H^1(\rn)$,
\begin{align*}
&\text{locally if} \ 0<\s<\wt{2}
:=\left\{ \begin{array}{ll}
(2-b)/(N-2) & \text{if} \ N\ge3,\\
\infty & \text{if} \ N\in\{1,2\};\\
\end{array}\right.\\
&\text{globally for small initial conditions if}  \  0<\s<\wt{2};\\
&\text{globally for any initial condition in} \ H^1(\rn) \ \text{if} \ 0<\s<\frac{2-b}{N}.
\end{align*}
Theorem~\ref{global} below answers the natural question: in the limit case $\s=(2-b)/N$, how small should the initial condition be to have global existence?
We consider here strong solutions $\phi=\phi(t,x)\in C^0_t H^1_x ([0,T)\x\rn)$ for some $T>0$, and the notion of well-posedness as defined in \cite{cazenave}. Our notation for the space-time function spaces comes from \cite{tao}. We may simply denote by $\phi(t)\in H^1(\rn)$ the function $x\to\phi(t,x)$. The solution is called global (in time) if we can take $T=\infty$. If it is not the case, the blowup alternative states that $\Vert\phi(t)\Vert_{H^1}\to\infty$ as $t\uparrow T$. Moreover, we have conservation of the $L^2$ norm along the flow of \eqref{nls}, 
$$
\Vert \phi(t)\Vert_{L^2_x}=\Vert \phi_0\Vert_{L^2_x} \quad \text{for all} \ t\in[0,T),
$$
and of the energy
\begin{equation}\label{energy}
E(\phi(t)):=\intrn|\nabla\phi(t)|^2\diff x - \frac{1}{\s+1}\intrn|x|^{-b}|\phi(t)|^{2\s+2}\diff x
=E(\phi_0)  \quad \text{for all} \ t\in[0,T).
\end{equation}
Also, the $L^2$ norm of $\phi(t)$ is invariant under the transformation \eqref{scaling}, 
$\Vert\phi(t)\Vert_{L^2_x}=\Vert\phi_\lambda(t)\Vert_{L^2_x}$. This is why it is called the 
$L^2$ scaling.

\medskip

A standing wave for \eqref{nls} is a (global) solution of the form 
$\ffi_\omega(t,x)=\e^{i\omega^2 t}u_\omega(x)$ for some $\omega\in\real$, with 
$u_\omega\in H^1(\rn)$ satisfying the stationary equation
\begin{equation}\label{snls}\tag{$\mathrm{E}_\omega$}
\Delta u - \omega^2 u + |x|^{-b}|u|^{2\s}u=0.
\end{equation}

In \cite{gs,these,g}, we were concerned with bifurcation and orbital
stability of standing waves for nonlinear Schr\"odinger equations with inhomogeneous nonlinearities of the form $V(x)|\phi|^{2\s}\phi$ with $V(x)\sim |x|^{-b}$ at infinity or around the origin. These equations have important applications in nonlinear optics (see \cite{g}). The limiting problem \eqref{nls} turned out to play a central role in our analysis. For this model case, a global branch of positive solutions of \eqref{snls} is simply given by the mapping $u\in C^1((0,\infty),H^1(\rn))$,
\begin{equation}\label{branch}
\omega \mapsto u_\omega(x)= u(\omega)(x):=\omega^{\frac{2-b}{2\s}}u_1(\omega x),
\end{equation}
where $u_1$ is the unique positive radial solution (ground state) of \eqref{snls} with 
$\omega=1$. The existence of the ground state is proved in \cite{gs,these} by variational methods in dimension $N\ge2$, and in \cite{g} for $N=1$. Uniqueness is a delicate problem, handled in dimension $N\ge3$ by a theorem of Yanagida \cite{yan}
(see \cite{these}), in dimension $N=2$ by a shooting argument \cite{uniqueness}, and in dimension $N=1$ by the method of horizontal separation of graphs of Peletier and Serrin \cite{pelser}, as used in \cite{toland}. These existence and uniqueness results hold for $0<b<\min\{2,N\}$ and $0<\s<\wt{2}$.

\medskip

Using the general theory of orbital stability of Grillakis, Shatah and Strauss \cite{gss}, we obtained in \cite{gs,these,g} various stability/instability results for general nonlinearities $V(x)|\phi|^{2\s}\phi$ by studying the monotonicity of the $L^2$ norm of the standing waves, as a function of $\omega>0$. It turned out that $\s=(2-b)/N$ is a threshold for stability in the regimes we considered. For this value of $\s$, we could not determine if the standing waves are stable or not, even in the model case $V(x)=|x|^{-b}$. In fact, if 
$\s=(2-b)/N$, we have  
$\Vert u_\omega\Vert_{L^2}=\Vert u_1\Vert_{L^2}$ along the curve of solutions 
\eqref{branch}, for $u_\omega$ is then an $L^2$ scaling of $u_1$. In 
Section~\ref{instab}, we prove a strong instability result for standing waves of \eqref{nls}, without requiring that $u_\omega$ be the ground state of \eqref{snls}.

\medskip

Section~\ref{criticalmass} is devoted to a sharp global existence result in the spirit of Weinstein \cite{weinstein}. For $\s=(2-b)/N$ we prove that the solutions of \eqref{nls} are global in time provided $\Vert\phi_0\Vert_{L^2}<\Vert\psi\Vert_{L^2}$, where $\psi$ is the ground state of ($\mathrm{E}_1$). This is done by computing the best constant for an interpolation inequality. The sharpness of the result is proved in Section~\ref{instab} where we construct self-similar solutions blowing up in finite time, in particular with the critical mass $\Vert\psi\Vert_{L^2}$.

\medskip  

Related results for inhomogeneous nonlinear Schr\"odinger equations can be found in the literature, see for instance \cite{merle96} and \cite{chenguo2007}. However, no one seems to have noticed the possibility of $L^2$ scaling invariance. The results established here use basic ideas going back to \cite{weinstein,weinstein2}. The classical $L^2$ critical case ($b=0$) has been studied extensively, and in particular the properties of the blowup solutions are quite well-known (see \cite{raphael} for a survey). The case $b\neq0$ certainly deserves further investigation.

\medskip

\noindent{\bf Notation.} In Section~\ref{criticalmass} we work in the Sobolev space of real-valued functions $H:=H^1(\rn,\real)$. We use the shorthand notation 
$\Vert\cdot\Vert_p:=\Vert\cdot\Vert_{L^p}$ for the usual Lebesgue norms throughout.

\section{Critical mass and global existence}\label{criticalmass}

We start by solving the minimization problem
\begin{equation}\label{mp}
\inf_{u\in H\sm\{0\}}J(u)
\end{equation}
where $J:H\sm\{0\}\to\real$ is the Weinstein functional defined by
\begin{equation}\label{J}
J(u)=J_{N,b}(u)=\frac{\Vert\nabla u\Vert_2^2\Vert u\Vert_2^{2\s}}{I(u)} \quad
\text{for} \ \s=\frac{2-b}{N},
\end{equation}
with
\begin{equation}\label{I}
I(u)=\intrn|x|^{-b}|u|^{2\s+2}\diff x.
\end{equation}

\begin{lemma}\label{wsc}
For $N\geq1$, $0<b<\min\{2,N\}$ and $0<\s<\wt{2}$, the functional $I:H\to\real$ defined in \eqref{I} is of class $C^1(H,\real)$ and is weakly sequentially continuous. In particular, it follows that $J\in C^1(H\sm\{0\},\real)$.
\end{lemma}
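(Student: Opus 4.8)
The plan is to base everything on the weighted Sobolev embedding $H\hookrightarrow L^{2\s+2}(\rn,|x|^{-b}\diff x)$. First I would record the Hardy--Sobolev--Gagliardo--Nirenberg (Caffarelli--Kohn--Nirenberg) interpolation inequality
\[
\intrn|x|^{-b}|u|^{2\s+2}\diff x \le C\Vert\nabla u\Vert_2^{a}\Vert u\Vert_2^{2\s+2-a}\le C'\Vert u\Vert_H^{2\s+2},
\]
valid precisely because the hypotheses $0<b<\min\{2,N\}$ and $0<\s<\wt2$ place $2\s+2$ strictly below the critical Hardy--Sobolev exponent $2(N-b)/(N-2)$ for $N\ge3$ (for $N\in\{1,2\}$ every finite exponent is admissible). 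This shows that $I$ is finite on $H$ and that the embedding is continuous; it is convenient to absorb the weight by writing $I(u)=\Vert |x|^{-b/(2\s+2)}u\Vert_{2\s+2}^{2\s+2}$, i.e. to work in the weighted Lebesgue space $L^{2\s+2}(\rn,|x|^{-b}\diff x)$.

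For the $C^1$ assertion I would compute the candidate derivative
\[
I'(u)v=(2\s+2)\intrn|x|^{-b}|u|^{2\s}uv\diff x
\]
and verify it is the Fr\'echet derivative by a standard superposition-operator argument. Splitting the weight as $|x|^{-b}=(|x|^{-b})^{1/(2\s+2)'}(|x|^{-b})^{1/(2\s+2)}$ and applying H\"older yields $|I'(u)v|\le(2\s+2)I(u)^{1/(2\s+2)'}I(v)^{1/(2\s+2)}$, so $I'(u)$ is a bounded linear functional on $H$; continuity of $u\mapsto I'(u)$ then reduces to continuity of the Nemytskii operator $u\mapsto|u|^{2\s}u$ from $L^{2\s+2}(\rn,|x|^{-b}\diff x)$ into its dual $L^{(2\s+2)'}(\rn,|x|^{-b}\diff x)$, which holds because $s\mapsto|s|^{2\s}s$ is continuous with matching growth $|{|s|^{2\s}s}|^{(2\s+2)'}=|s|^{2\s+2}$.

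Weak sequential continuity is the main obstacle, since the embedding is not compact and $I$ is genuinely \emph{not} weakly continuous when $b=0$; the singular weight at the origin is what saves us. Given $u_n\wto u$ in $H$, hence bounded in $H$ and in $L^{2\s+2}(\rn)$ by Sobolev, I would split $I$ over $B_\delta$, the annulus $A_{\delta,R}=B_R\sm B_\delta$, and $B_R^c$. On $B_R^c$ the weight obeys $|x|^{-b}\le R^{-b}$, so that part of $I(u_n)$ is bounded by $R^{-b}\Vert u_n\Vert_{2\s+2}^{2\s+2}\to0$ as $R\to\infty$, uniformly in $n$. On $B_\delta$ I would pick an exponent $p>2\s+2$ still admissible for the weighted embedding and interpolate: H\"older with exponents $p/(2\s+2)$ and its conjugate gives
\[
\int_{B_\delta}|x|^{-b}|u_n|^{2\s+2}\diff x\le\Big(\intrn|x|^{-b}|u_n|^{p}\diff x\Big)^{\!(2\s+2)/p}\Big(\int_{B_\delta}|x|^{-b}\diff x\Big)^{\!1-(2\s+2)/p},
\]
where the first factor is bounded by the embedding while $\int_{B_\delta}|x|^{-b}\diff x=c_N\delta^{N-b}/(N-b)\to0$ (using $b<N$), so this contribution is small uniformly in $n$. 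On the fixed annulus the weight is bounded above and below and $2\s+2$ is subcritical, so Rellich--Kondrachov gives $u_n\to u$ strongly in $L^{2\s+2}(A_{\delta,R})$ and hence convergence of that piece of $I$. Choosing first $\delta$ small and $R$ large, then passing to the limit in $n$, yields $I(u_n)\to I(u)$.

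Finally, $J\in C^1(H\sm\{0\})$ follows at once: $u\mapsto\Vert\nabla u\Vert_2^2\Vert u\Vert_2^{2\s}$ is $C^1$ on $H\sm\{0\}$, $I$ is $C^1$ by the above, and $I(u)>0$ for $u\neq0$, so the quotient is $C^1$ there. The delicate points to get right are the precise admissible exponent range in the interpolation inequality (and the separate treatment of $N\in\{1,2\}$ versus $N\ge3$) together with the uniform-in-$n$ smallness near the origin, which is exactly where the extra exponent $p>2\s+2$ and the finiteness of $\int_{B_\delta}|x|^{-b}\diff x$ enter.
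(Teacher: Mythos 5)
Your proof is correct, and it is worth noting that the paper itself gives no argument at all for this lemma: its ``proof'' is a pointer to \cite[Section~2.1]{gs}, \cite[Section~1.1]{these} and \cite[Section~2]{g}, where these properties of $I$ are established for nonlinearities with weights behaving like $|x|^{-b}$. Your self-contained argument follows essentially the same route as those references: the Caffarelli--Kohn--Nirenberg bound to get continuity of the embedding $H\hookrightarrow L^{2\s+2}(\rn,|x|^{-b}\diff x)$ (your exponent bookkeeping is right: $\s<\wt2$ is exactly $2\s+2<2(N-b)/(N-2)$ for $N\ge3$), a Nemytskii-operator argument in the weighted Lebesgue space for the $C^1$ property, and the three-region splitting $B_\delta\cup A_{\delta,R}\cup B_R^c$ with Rellich--Kondrachov on the annulus for weak sequential continuity. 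You also correctly isolate the key structural point, which the paper leaves implicit: weak continuity fails when $b=0$, and it is the decay $|x|^{-b}\le R^{-b}$ at infinity (giving uniform smallness of the tails, where translation-type loss of compactness would otherwise occur) together with the local integrability $\int_{B_\delta}|x|^{-b}\diff x\sim\delta^{N-b}$ near the origin that restores compactness of the weighted embedding. Two small points you should make explicit in a final write-up: first, that the uniform-in-$n$ bounds on $B_\delta$ and $B_R^c$ apply to the limit $u$ as well (via weak lower semicontinuity of the $H$-norm), so the $\eps$-argument closes; second, that strong convergence on the annulus holds for the \emph{full} sequence, by the standard subsequence-of-subsequence argument, since every subsequence has a further subsequence converging strongly to the same limit $u$. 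Neither is a gap, just a line each.
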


\begin{proof}
See \cite[Section~2.1]{gs} and \cite[Section~1.1]{these} for $N\ge2$, \cite[Section~2]{g} for $N=1$.
\end{proof}

\begin{proposition}\label{min}
Let $N\geq1$, $0<b<\min\{2,N\}$ and $\sigma=(2-b)/N$. There exists a positive radial function $\psi\in H$ such that:
\item[(i)] $\psi$ is a minimizer for \eqref{mp}, that is,
$J_{N,b}(\psi)=\disp\inf_{u\in H\sm\{0\}}J_{N,b}(u)$;
\item[(ii)] $\psi$ is the unique ground state of 
$\mathrm{(E}_{\sqrt{\s}}\mathrm{)}$.

\noindent
Furthermore, the minimum value is 
$J_{N,b}(\psi)=\disp\frac{\Vert\psi\Vert_2^{2\s}}{\s+1}
=\frac{\Vert\psi\Vert_2^{\frac{4-2b}{N}}}{\frac{2-b}{N}+1}$.
\end{proposition}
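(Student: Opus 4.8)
The plan is to solve the minimization problem \eqref{mp} by the direct method, the essential point being that the inhomogeneity $|x|^{-b}$ restores the compactness that is the main obstruction in the translation-invariant case $b=0$; this compactness is exactly the content of Lemma~\ref{wsc}. First I would record the two scaling invariances of $J$: both the multiplication $u\mapsto\mu u$ ($\mu>0$) and the $L^2$ dilation $u\mapsto\lambda^{N/2}u(\lambda\,\cdot)$ ($\lambda>0$) leave $J$ invariant, the latter because $N\s+b=2$ makes $I(u)$ scale like $\Vert\nabla u\Vert_2^2$ while $\Vert u\Vert_2$ is unchanged. Given a minimizing sequence $(u_n)$ for \eqref{mp}, I would spend these two degrees of freedom to normalize $\Vert u_n\Vert_2=\Vert\nabla u_n\Vert_2=1$, so that $J(u_n)=1/I(u_n)$ and $(u_n)$ is bounded in $H$.

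Passing to a weakly convergent subsequence $u_n\wto\psi$ in $H$, Lemma~\ref{wsc} gives $I(u_n)\to I(\psi)<\infty$. Since $m:=\inf J<\infty$ (as $J$ is finite on $H\sm\{0\}$), the alternative $m=0$ would force $I(u_n)=1/J(u_n)\to\infty$, contradicting weak continuity; hence $m\in(0,\infty)$ and $I(\psi)=1/m>0$, so $\psi\neq0$. Weak lower semicontinuity of the $H^1$ norms gives $\Vert\psi\Vert_2\le1$ and $\Vert\nabla\psi\Vert_2\le1$, whence $J(\psi)=\Vert\nabla\psi\Vert_2^2\Vert\psi\Vert_2^{2\s}/I(\psi)\le m$, and therefore $J(\psi)=m$: $\psi$ is a minimizer. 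To obtain a \emph{positive radial} minimizer I would symmetrize: replacing $\psi$ by $|\psi|$ does not increase $J$, and for the symmetric decreasing rearrangement $\psi^*$ the P\'olya--Szeg\H o inequality gives $\Vert\nabla\psi^*\Vert_2\le\Vert\nabla\psi\Vert_2$, while the Hardy--Littlewood inequality, together with the fact that the weight $|x|^{-b}$ is itself radially decreasing, yields $I(\psi^*)\ge I(\psi)$; hence $J(\psi^*)\le J(\psi)=m$. This establishes part (i).

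For part (ii), $J\in C^1(H\sm\{0\},\real)$ by Lemma~\ref{wsc} and $\psi$ is an interior minimizer, so $J'(\psi)=0$. Writing $A=\Vert\nabla\psi\Vert_2^2$, $B=\Vert\psi\Vert_2^2$, $C=I(\psi)$ and computing the derivative, the critical point equation reads, in weak form,
\begin{equation*}
\Delta\psi-\frac{\s A}{B}\,\psi+\frac{(\s+1)A}{C}\,|x|^{-b}\psi^{2\s+1}=0.
\end{equation*}
I would then use the two scaling parameters once more to normalize the two coefficients: choosing $\lambda^2=B/A$ and $\mu^{2\s}=(\s+1)A/C$ turns this into $\Delta\psi-\s\psi+|x|^{-b}\psi^{2\s+1}=0$, that is, exactly $(\mathrm{E}_{\sqrt{\s}})$ (relabelling the rescaled minimizer $\psi$). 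The strong maximum principle upgrades $\psi\ge0$ to $\psi>0$, and since $\psi$ is radial, the uniqueness of the positive radial solution quoted in the introduction identifies $\psi$ with the ground state of $(\mathrm{E}_{\sqrt{\s}})$.

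For the value of the minimum I would test $(\mathrm{E}_{\sqrt{\s}})$ against $\psi$ and integrate (the Nehari identity), obtaining $I(\psi)=\Vert\nabla\psi\Vert_2^2+\s\Vert\psi\Vert_2^2$. The normalization that produced $(\mathrm{E}_{\sqrt{\s}})$ forced the coefficient of $\psi$ to be $\s$, i.e. $\Vert\nabla\psi\Vert_2^2=\Vert\psi\Vert_2^2$, so $I(\psi)=(\s+1)\Vert\psi\Vert_2^2$ and hence
\begin{equation*}
J(\psi)=\frac{\Vert\nabla\psi\Vert_2^2\,\Vert\psi\Vert_2^{2\s}}{I(\psi)}=\frac{\Vert\psi\Vert_2^{2\s}}{\s+1},
\end{equation*}
which is the asserted minimum value (with $2\s=(4-2b)/N$ and $\s+1=(2-b)/N+1$). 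I expect the only genuine analytic difficulty to have been front-loaded into Lemma~\ref{wsc}: once weak sequential continuity of $I$ is in hand, the direct method runs without any concentration-compactness analysis, and what remains is the routine but bookkeeping-heavy scaling normalizations and the standard rearrangement inequalities against the radially decreasing weight.
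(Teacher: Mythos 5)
Your proof is correct and follows essentially the same route as the paper: Weinstein's direct method, with Lemma~\ref{wsc} supplying the compactness, the two-parameter scaling invariance of $J$ used for normalization, and the Euler--Lagrange equation rescaled into $(\mathrm{E}_{\sqrt{\s}})$ so that positivity, radial symmetry and the quoted uniqueness result identify the minimizer with the ground state. The only cosmetic differences are that you symmetrize the weak limit (via P\'olya--Szeg\H{o} and Hardy--Littlewood against the radially decreasing weight) where the paper Schwarz-symmetrizes the minimizing sequence, and that you extract the minimum value explicitly from the Nehari identity together with $\Vert\nabla\psi\Vert_2=\Vert\psi\Vert_2$, a computation the paper leaves implicit after fixing unit norms along the sequence.
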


\begin{proof} We follow Weinstein \cite{weinstein}. 
Let $\{u_n\}\subset H\sm\{0\}$ be a minimizing sequence for \eqref{mp}: 
$$
J(u_n)\to m:=\inf J\ge0 \quad \text{as} \ n\to\infty.
$$ 
Clearly, we can choose $u_n\ge0$. Moreover, by Schwarz symmetrization 
(see \cite[p.146]{gs}) we can suppose that $u_n$ is radial and radially non-increasing for all $n$. It follows from the structure of $J=J_{N,b}$ that $J$ is invariant under the scaling $u\to u_{\lambda,\mu}(x):=\lambda u(\mu x)$, $\lambda,\mu>0$. (This is not the case for 
$\sigma\neq(2-b)/N$.) This allows us to choose $u_n$ such that 
$$
\Vert\nabla u_n\Vert_2=\Vert u_n\Vert_2=1 \quad \text{for all} \ n.
$$
Hence there exists $u^*\in H$ such that, up to a subsequence, $u_n\wto u^*$ weakly in $H$. Furthermore, $u^*$ is non-negative, spherically symmetric, radially non-increasing, and
\begin{equation}\label{norms}
\Vert\nabla u^*\Vert_2\le 1 \quad \text{and} \quad \Vert u^*\Vert_2\le 1.
\end{equation}
Now by Lemma~\ref{wsc} and \eqref{norms} we have
\begin{equation}\label{limit}
m=\lim J(u_n)=\lim\frac{1}{I(u_n)}=\frac{1}{I(u^*)}\ge J(u^*)
\end{equation}
so that, in fact, $J(u^*)=m$ and $\Vert\nabla u^*\Vert_2=\Vert u^*\Vert_2=1$. In particular, $u_n\to u^*$ strongly in $H$. (Note that \eqref{limit} prevents $u^*=0$.) This concludes the proof of (i).

\medskip

To show that $\psi$ can be chosen so as to satisfy ($\mathrm{E}_{\sqrt{\s}}$), we first remark that $u^*$ is a solution of the Euler-Lagrange equation corresponding to 
\eqref{mp}, which reads
$$
\Delta u^* -\s u^* + m(\s+1)|x|^{-b}(u^*)^{2\s+1}=0.
$$
Setting $u^*=[m(\s+1)]^{-1/2\s}\psi$, it follows that $\psi$ is a solution of 
($\mathrm{E}_{\sqrt{\s}}$). Furthermore, $\psi$ is positive and radial, so it is the unique ground state of ($\mathrm{E}_{\sqrt{\s}}$).
\end{proof}

\medskip

As an immediate consequence we have

\begin{corollary}\label{best}
$C_{N,b}:=\disp\frac{\frac{2-b}{N}+1}{\Vert\psi\Vert_2^{\frac{4-2b}{N}}}$ is the best constant for the inequality
\begin{equation}\label{interp}
\intrn|x|^{-b}|u|^{\frac{4-2b}{N}+2}\diff x\leq 
C\Vert\nabla u\Vert_2^2\Vert u\Vert_2^{\frac{4-2b}{N}}, \quad u\in H.
\end{equation}
\end{corollary}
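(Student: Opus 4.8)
The plan is to read off the inequality directly from the variational characterization of $m:=\inf J$ obtained in Proposition~\ref{min}, so no new analysis is needed. Since $\s=(2-b)/N$, the exponent appearing in \eqref{interp} is exactly $2\s+2$, so the left-hand integral there is precisely $I(u)$ as defined in \eqref{I}. The definition \eqref{J} of the Weinstein functional then gives, for every $u\in H\sm\{0\}$,
$$
I(u)=\frac{\Vert\nabla u\Vert_2^2\Vert u\Vert_2^{2\s}}{J(u)}\le\frac{1}{m}\,\Vert\nabla u\Vert_2^2\Vert u\Vert_2^{2\s},
$$
where the inequality is just the bound $J(u)\ge m$. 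Hence \eqref{interp} holds with $C=1/m$, the case $u=0$ being trivial.

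First I would compute $1/m$ from the value furnished by Proposition~\ref{min}. Since $m=J_{N,b}(\psi)=\Vert\psi\Vert_2^{2\s}/(\s+1)$, we obtain
$$
\frac{1}{m}=\frac{\s+1}{\Vert\psi\Vert_2^{2\s}}=\frac{\frac{2-b}{N}+1}{\Vert\psi\Vert_2^{\frac{4-2b}{N}}}=C_{N,b},
$$
using $2\s=(4-2b)/N$. This already shows that \eqref{interp} holds with the stated constant $C_{N,b}$.

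It remains to check sharpness, namely that no smaller constant works; this is the one place where the \emph{existence} of the minimizer from Proposition~\ref{min} (rather than just the value of the infimum) is essential. Taking $u=\psi$, the identity $J_{N,b}(\psi)=m$ rearranges to $I(\psi)=C_{N,b}\Vert\nabla\psi\Vert_2^2\Vert\psi\Vert_2^{2\s}$, so equality is attained in \eqref{interp}. Consequently any $C<C_{N,b}$ would be violated at $u=\psi$, and $C_{N,b}$ is indeed the best constant. I expect no real obstacle here: all the analytic work — the compactness of the minimizing sequence and the Euler--Lagrange computation identifying the minimizer with the ground state — was already carried out in Proposition~\ref{min}, and the corollary is merely the reformulation of $\inf J=m$ as an optimal interpolation inequality.
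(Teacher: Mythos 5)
Your proof is correct and is exactly the argument the paper leaves implicit when it calls the corollary an ``immediate consequence'' of Proposition~\ref{min}: the inequality $J(u)\ge m$ rearranged, with $1/m=C_{N,b}$, and sharpness from the minimizer $\psi$. The only minor quibble is that attainment of the infimum is not strictly needed for sharpness (any $C<1/m$ already fails along a minimizing sequence); the minimizer only upgrades this to equality being attained, but this does not affect the validity of your argument.
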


\begin{remark} 
{\rm
Note that \eqref{interp} is a special case of the interpolation inequalities
obtained in \cite{CKN}.
}
\end{remark}

We now turn to the global existence result.

\begin{theorem}\label{global}
Set $\sigma=(2-b)/N$ and let $\psi$ be the ground state of 
$\mathrm{(E}_1\mathrm{)}$. If 
$$
\Vert\phi_0\Vert_2<\Vert\psi\Vert_2,
$$ 
the solution of \eqref{nls} is global and bounded in $H^1$.
\end{theorem}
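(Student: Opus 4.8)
The plan is to combine conservation of mass and energy with the sharp interpolation inequality of Corollary~\ref{best} to obtain an a priori bound on $\Vert\nabla\phi(t)\Vert_2$ that is uniform in $t$, and then invoke the blowup alternative to rule out finite-time blowup. No new variational work is needed: the difficulty has been front-loaded into the sharp constant computed in Proposition~\ref{min}.

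First I would apply Corollary~\ref{best} to the solution $\phi(t)$. Since \eqref{interp} is stated for real-valued functions whereas $\phi(t)$ is complex-valued, I would pass to $|\phi(t)|\in H$ and use the diamagnetic inequality $\Vert\nabla|\phi(t)|\Vert_2\le\Vert\nabla\phi(t)\Vert_2$ to obtain, with the same constant and using $2\s=\frac{4-2b}{N}$,
$$
I(\phi(t))=\intrn|x|^{-b}|\phi(t)|^{2\s+2}\diff x \le C_{N,b}\,\Vert\nabla\phi(t)\Vert_2^2\,\Vert\phi(t)\Vert_2^{2\s}.
$$
The key algebraic observation is that $C_{N,b}/(\s+1)=\Vert\psi\Vert_2^{-2\s}$, read off directly from $C_{N,b}=(\s+1)\Vert\psi\Vert_2^{-2\s}$. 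Combining this with conservation of the $L^2$ norm, $\Vert\phi(t)\Vert_2=\Vert\phi_0\Vert_2$, the potential energy term is controlled by
$$
\frac{1}{\s+1}\,I(\phi(t)) \le \Big(\frac{\Vert\phi_0\Vert_2}{\Vert\psi\Vert_2}\Big)^{2\s}\Vert\nabla\phi(t)\Vert_2^2 =: \theta\,\Vert\nabla\phi(t)\Vert_2^2,
$$
where $0\le\theta<1$ \emph{precisely because} the hypothesis $\Vert\phi_0\Vert_2<\Vert\psi\Vert_2$ holds.

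Inserting this estimate into the energy identity \eqref{energy}, which gives $E(\phi(t))=E(\phi_0)$, I would get
$$
E(\phi_0)=\Vert\nabla\phi(t)\Vert_2^2-\frac{1}{\s+1}I(\phi(t)) \ge (1-\theta)\,\Vert\nabla\phi(t)\Vert_2^2,
$$
and hence, since $1-\theta>0$,
$$
\Vert\nabla\phi(t)\Vert_2^2 \le \frac{E(\phi_0)}{1-\theta}\qquad\text{for all }t\in[0,T).
$$
(Evaluating at $t=0$ also shows $E(\phi_0)\ge0$, so the right-hand side is nonnegative.) Together with $\Vert\phi(t)\Vert_2=\Vert\phi_0\Vert_2$, this bounds $\Vert\phi(t)\Vert_{H^1}$ uniformly on $[0,T)$. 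The blowup alternative asserts that a non-global solution must satisfy $\Vert\phi(t)\Vert_{H^1}\to\infty$ as $t\uparrow T$; the uniform bound contradicts this unless $T=\infty$, so the solution is global and bounded in $H^1$.

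I do not expect a serious obstacle once Corollary~\ref{best} is in hand. The only two points requiring care are the passage from the real-valued inequality \eqref{interp} to the complex-valued solution via the diamagnetic inequality, and the bookkeeping of exponents ensuring that $C_{N,b}/(\s+1)$ collapses to exactly $\Vert\psi\Vert_2^{-2\s}$. It is this last identity that makes the subcriticality condition $\Vert\phi_0\Vert_2<\Vert\psi\Vert_2$ translate \emph{sharply} into the strict inequality $\theta<1$, keeping the coefficient $1-\theta$ positive; at the threshold $\theta=1$ the argument breaks down, which is consistent with the blowup solutions constructed at the critical mass in Section~\ref{instab}.
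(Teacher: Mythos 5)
Your argument is, in structure, the same as the paper's: conservation of mass and energy, the sharp constant from Corollary~\ref{best} applied to $\phi(t)$, absorption of the potential term into the kinetic term with coefficient $\theta<1$, and the blowup alternative. Your explicit passage from the real-valued inequality \eqref{interp} to the complex-valued solution via $\Vert\nabla|\phi(t)|\Vert_2\le\Vert\nabla\phi(t)\Vert_2$ is a point of care that the paper's proof leaves implicit, and it is welcome.

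There is, however, one genuine gap in the bookkeeping of which ground state $\psi$ is. By Proposition~\ref{min}(ii), the function $\psi$ entering the constant $C_{N,b}$ of Corollary~\ref{best} is the ground state of $(\mathrm{E}_{\sqrt{\s}})$, whereas the $\psi$ in the hypothesis of Theorem~\ref{global} is the ground state of $(\mathrm{E}_1)$. Your ``key algebraic observation'' $C_{N,b}/(\s+1)=\Vert\psi\Vert_2^{-2\s}$ is literally true only for the former, so what your estimate proves as written is global existence under $\Vert\phi_0\Vert_2<\Vert\psi_{\sqrt{\s}}\Vert_2$, where $\psi_{\sqrt{\s}}$ denotes the ground state of $(\mathrm{E}_{\sqrt{\s}})$; the stated hypothesis $\Vert\phi_0\Vert_2<\Vert\psi\Vert_2$ with $\psi$ the ground state of $(\mathrm{E}_1)$ does not by itself give $\theta<1$. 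The missing step, which is exactly the last paragraph of the paper's proof, is that for the critical exponent $\s=(2-b)/N$ the rescaling $\psi\mapsto\lambda^{-N/2}\psi(\lambda^{-1}x)$ with $\lambda=\sqrt{\s}$ carries the ground state of $(\mathrm{E}_{\sqrt{\s}})$ to the ground state of $(\mathrm{E}_1)$ (compare \eqref{branch}, where $\frac{2-b}{2\s}=\frac{N}{2}$), and this rescaling is precisely the $L^2$-norm-preserving one; hence $\Vert\psi_{\sqrt{\s}}\Vert_2=\Vert\psi\Vert_2$ and your $\theta$ is indeed less than $1$ under the stated hypothesis. This is a one-line repair, but it is the very point where $L^2$ criticality is used to normalize the frequency, so it cannot be omitted.
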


\begin{proof} Local existence of solutions to \eqref{nls} is ensured by results in 
\cite{cazenave} (see \cite[Appendix~K]{gs} for precise statements and references). So the maximal solution $\phi(t,x)$ of \eqref{nls} with initial condition $\phi_0$ is defined on a time interval $[0,T)$ with $T\in(0,\infty]$. Moreover, we have the conservation laws
$$
E(\phi(t))=E(\phi_0) \quad \text{and} \quad \Vert\phi(t)\Vert_2=\Vert\phi_0\Vert_2
\quad \text{for all} \ t\in[0,T),
$$
where $E$ is defined in \eqref{energy}.
It is well-known since \cite{ginvel} that the boundedness of $\Vert\nabla\phi(t)\Vert_2$ is then sufficient to conclude global existence. Using the constants of motion, we have
\begin{align*}
\Vert\nabla\phi(t)\Vert_2^2 &=E(\phi(t))+\frac{1}{\s+1}\intrn|x|^{-b}|\phi(t)|^{2\s+2}\diff x\\
		&\le E(\phi_0)+\frac{C}{\s+1}\Vert\nabla\phi(t)\Vert_2^2\Vert \phi_0\Vert_2^{2\s},
\end{align*}
where $C=C_{N,b}>0$ is the constant given by Corollary~\ref{best}. Hence,
\begin{equation}\label{bound}
\left(1-\frac{C_{N,b}}{\frac{2-b}{N}+1}\Vert\phi_0\Vert_2^{\frac{4-2b}{N}}\right) 
\Vert\nabla\phi(t)\Vert_2^2\le E(\phi_0).
\end{equation}
Using the formula for $C_{N,b}$, it follows from \eqref{bound} that the solution is global if $\Vert\phi_0\Vert_2<\Vert\psi\Vert_2$ where $\psi$ is the ground state of 
($\mathrm{E}_{\sqrt{\s}}$). But for $\sigma=(2-b)/N$, ($\mathrm{E}_{\sqrt{\s}}$) is transformed into 
($\mathrm{E}_1$) by the scaling
$$
\psi\to \psi_{\lambda^{-1}}(x)=\lambda^{-N/2}\psi(\lambda^{-1}x) \quad 
\text{with} \ \lambda=\sqrt{\s}.
$$
Since this transformation leaves the $L^2$ norm unchanged, we can indeed choose 
$\psi$ to be the ground state of ($\mathrm{E}_1$). The proof is complete.
\end{proof}

\begin{remark} 
{\rm
We call $\Vert\psi\Vert_2$ the {\em critical mass} for \eqref{nls}. As we show below, the condition for global existence given by 
Theorem~\ref{global} is sharp in the sense that we can find solutions with critical mass which blow up in finite time.
}
\end{remark}

\section{Instability of standing waves}\label{instab}

It is a lenghty but straightforward calculation to show that \eqref{nls} is invariant under the pseudoconformal transformation, as defined in \cite[Section~6.7]{cazenave}.
Namely, for any $a\in\real$, if $\phi(s,y)\in C^0_s H^1_y ([0,S)\x\rn)$ is a solution to \eqref{nls} (with the obvious modification of the variables), then the function
$\phi_a(t,x)\in C^0_t H^1_x ([0,T)\x\rn)$ defined by
\begin{equation}\label{pseudo}
\phi_a(t,x)=(1-at)^{-\frac{N}{2}}\e^{-i\frac{a|x|^2}{4(1-at)}}
\phi\left(\frac{t}{1-at},\frac{x}{1-at}\right) \quad
\text{with} \quad
T=\left\{ \begin{array}{lr}
\infty & \text{if} \ aS\le -1\\
\frac{S}{1+aS} & \text{if} \ aS> -1\\
\end{array}\right.
\end{equation}
is also a solution. The fact that \eqref{nls} with $\s=(2-b)/N$ behaves nicely under 
\eqref{pseudo} when $b>0$ is closely related to the $L^2$ scaling invariance of the equation. In fact, the pseudoconformal transformation conserves the $L^2$ norm:
$$
\Vert \phi_a(t)\Vert_2\equiv \Vert \phi(s)\Vert_2.
$$

Using \eqref{pseudo}, we now show that all standing waves for \eqref{nls} with 
$\s=(2-b)/N$ are strongly unstable in the following sense. By scaling, it is enough to consider the case $\omega=1$.

\begin{theorem}\label{blow}
Let $u\in H$ be a nontrivial solution of $\mathrm{(E}_1\mathrm{)}$. 
For any $\delta>0$ there exists a solution
$\ffi\in C^0_t H^1_x ([0,T)\x\rn)$ of \eqref{nls} such that 
$\Vert\ffi(0)-u\Vert_{H^1}<\delta$ and $\Vert\ffi(t)\Vert_{H^1}\to\infty$ as 
$t\uparrow T$.
\end{theorem}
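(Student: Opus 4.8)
The plan is to apply the pseudoconformal transformation \eqref{pseudo} to the standing wave $\phi(s,y)=\e^{is}u(y)$, which is a global solution of \eqref{nls} (defined for all $s\ge0$, i.e. $S=\infty$) precisely because $u$ solves $(\mathrm{E}_1)$. For a parameter $a>0$, since $aS>-1$, formula \eqref{pseudo} yields a new solution $\ffi:=\phi_a\in C^0_tH^1_x$ of \eqref{nls} defined on $[0,T)$ with $T=1/a<\infty$, the limiting value of $S/(1+aS)$ as $S\to\infty$; on this interval $1-at>0$ and the new time $s=t/(1-at)$ sweeps out $[0,\infty)$. I would then check the two requirements of the theorem — closeness to $u$ at $t=0$ and $H^1$-blowup as $t\uparrow T$ — by direct computation, and finally fix $a=a(\delta)$ small enough.

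First, evaluating \eqref{pseudo} at $t=0$ gives $\ffi(0,x)=\e^{-ia|x|^2/4}u(x)$, so I would show $\Vert\ffi(0)-u\Vert_{H^1}\to0$ as $a\to0^+$. The $L^2$ difference tends to $0$ by dominated convergence (dominant $4|u|^2$); the contribution $(\e^{-ia|x|^2/4}-1)\nabla u$ to the gradient difference vanishes likewise by dominated convergence; and the remaining gradient term is controlled by $\tfrac{a}{2}\Vert\,|x|u\Vert_2$. Hence closeness holds provided $|x|u\in L^2$, after which I would pick $a$ so small that $\Vert\ffi(0)-u\Vert_{H^1}<\delta$.

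Second, for the blowup I would differentiate \eqref{pseudo} and, after the change of variables $y=x/(1-at)$, arrive at an explicit identity of the form
$$\Vert\nabla\ffi(t)\Vert_2^2=\frac{\Vert\nabla u\Vert_2^2}{(1-at)^2}+O\!\left((1-at)^{-1}\right),$$
where the remainder gathers the $O(1)$ contribution $\tfrac{a^2}{4}\Vert\,|x|u\Vert_2^2$ and, for complex $u$, a cross term of order $(1-at)^{-1}$. Since $u$ is a nontrivial $H^1$ function it is nonconstant, so $\Vert\nabla u\Vert_2>0$ and the leading $(1-at)^{-2}$ term dominates as $t\uparrow T=1/a$, forcing $\Vert\nabla\ffi(t)\Vert_2\to\infty$. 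Together with the pseudoconformal conservation $\Vert\ffi(t)\Vert_2\equiv\Vert u\Vert_2$ this gives $\Vert\ffi(t)\Vert_{H^1}\to\infty$, as required.

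The main obstacle is the finite-variance condition $|x|u\in L^2$, which is exactly what makes $\ffi(0)=\e^{-ia|x|^2/4}u$ a genuine $H^1$ function and what controls the gradient terms above; I would establish it from the decay of solutions of $(\mathrm{E}_1)$. Because the linear part $\Delta u-u$ dominates the nonlinear term $|x|^{-b}|u|^{2\s}u$ at infinity, any nontrivial $H^1$ solution of $(\mathrm{E}_1)$ is smooth away from the origin and decays exponentially, by a standard comparison and elliptic-regularity argument applied to $|u|$; the singular weight at the origin is harmless since $b<N$. This places $u$ in the natural space $\Sigma:=\{f\in H^1:|x|f\in L^2\}$ on which the pseudoconformal transformation operates, and completes the argument.
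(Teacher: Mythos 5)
Your proposal is correct and takes essentially the same approach as the paper: apply the pseudoconformal transformation \eqref{pseudo} to the standing wave $\e^{it}u(x)$ to produce a self-similar solution blowing up at $T=1/a$, then use the exponential decay of $u$ (hence $|x|u\in L^2$) and dominated convergence to get $\Vert\ffi(0)-u\Vert_{H^1}<\delta$ for $a$ small. The only difference is cosmetic: you bound the gradient difference at $t=0$ by the triangle inequality instead of computing it exactly, which in fact sidesteps the real-part cross term $a\,u\,(x\cdot\nabla u)\sin(a|x|^2/4)$ that the paper's displayed identity \eqref{est2} omits.
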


\begin{proof} Let $a>0$ to be tuned later. We apply the transformation \eqref{pseudo} to the standing wave $\phi(t,x)=\e^{i t}u(x)$, defining $\ffi\in C^0_tH^1_x ([0,a^{-1})\x\rn)$
by ($S=\infty$ for $\phi$):
\begin{equation}\label{selfsim}
\ffi(t,x)=(1-at)^{-\frac{N}{2}}\e^{-i\frac{a|x|^2}{4(1-at)}}
\e^{i\frac{t}{1-at}} u\left(\frac{x}{1-at}\right).
\end{equation}
It is easy to check that 
$$
(1-at)\Vert\nabla\ffi(t)\Vert_2\to\Vert\nabla u\Vert_2 \quad \text{as} \ t\uparrow a^{-1}
$$ 
and so $\ffi$ blows up at finite time $T:=a^{-1}$. Furthermore, 
$\ffi(0,x)=\e^{-i\frac{a|x|^2}{4}}u(x)$ and we have:
\begin{align}
\Vert\ffi(0)-u\Vert_2^2 &=\intrn|\e^{-i\frac{a|x|^2}{4}}-1|^2u(x)^2\diff x\label{est1} \\
\text{and} \quad
\Vert\nabla\ffi(0)-\nabla u\Vert_2^2 &=
\intrn |\e^{-i\frac{a|x|^2}{4}}-1|^2|\nabla u(x)|^2+\frac{a^2}{4}|x|^2u(x)^2\diff x.\label{est2}
\end{align}
It is standard to show that $u$ decays exponentially and it follows by dominated convergence that both \eqref{est1} and \eqref{est2} go to zero as $a\to0$. Hence, for any $\delta>0$, there is $a_\delta>0$ such that $\Vert\ffi(0)-u\Vert_{H^1}<\delta$ whenever $0<a<a_\delta$. This concludes the proof.
\end{proof}

\begin{remark}
{\rm
\item[(i)] We know precisely the blowup rate of $\ffi$,
$$
\Vert\ffi(t)\Vert_{H^1}\sim (1-at)^{-1} \quad \text{and} \quad
\Vert\ffi(t)\Vert_\infty\sim(1-at)^{-N/2} \quad  \text{as} \ t\uparrow a^{-1}.
$$ 
\item[(ii)] The type of solutions constructed in \eqref{selfsim} are often called `self-similar' in the literature. In fact, the modulus $|\ffi(t,x)|=(1-at)^{-N/2}|u(x/(1-at))|$ presents a self-similar profile in the usual sense: at any time $t$, there is a scaling parameter 
$\lambda(t)>0$ such that $|u(x)|=\lambda(t)^{N/2}|\ffi(t,\lambda(t)x)|$. Thus $|\ffi(t)|$ retains the shape of $|u|$ while blowing up.
}
\end{remark}

\begin{corollary}
There exists a solution of \eqref{nls} with critical mass that blows up in finite time.
\end{corollary}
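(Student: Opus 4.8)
The plan is to specialize the construction of Theorem~\ref{blow} to the ground state and to exploit the $L^2$-isometry of the pseudoconformal transformation. Since $\psi$, the ground state of $(\mathrm{E}_1)$, is in particular a nontrivial solution of $(\mathrm{E}_1)$, I would simply take $u=\psi$ in Theorem~\ref{blow}. This immediately furnishes a solution $\ffi\in C^0_tH^1_x([0,T)\x\rn)$ of \eqref{nls}, given explicitly by \eqref{selfsim} with $u$ replaced by $\psi$, that blows up at the finite time $T=a^{-1}$.

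It then remains to check that this solution carries exactly the critical mass $\Vert\psi\Vert_2$. The key observation is that the pseudoconformal transformation \eqref{pseudo} preserves the $L^2$ norm, so that $\Vert\ffi(t)\Vert_2\equiv\Vert\e^{is}\psi\Vert_2=\Vert\psi\Vert_2$ for every $t\in[0,T)$. In particular, the initial datum $\ffi(0,x)=\e^{-ia|x|^2/4}\psi(x)$ satisfies $\Vert\ffi(0)\Vert_2=\Vert\psi\Vert_2$, which is precisely the critical mass appearing in Theorem~\ref{global}. Consistently, multiplication by the unimodular factor $\e^{-ia|x|^2/4}$ does not alter the modulus and hence leaves the $L^2$ norm unchanged.

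There is essentially no obstacle here: the statement follows by combining Theorem~\ref{blow} with the $L^2$-conservation of \eqref{pseudo}. The only point worth emphasizing is that it is the mass-critical structure $\s=(2-b)/N$ that makes the pseudoconformal transformation send solutions of \eqref{nls} to solutions while preserving the $L^2$ norm; this is exactly what places the blowup profile at the threshold $\Vert\phi_0\Vert_2=\Vert\psi\Vert_2$ of Theorem~\ref{global}, thereby confirming that the global-existence condition obtained there is sharp.
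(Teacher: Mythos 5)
Your proposal is correct and follows exactly the paper's argument: take $u=\psi$ in the construction \eqref{selfsim} of Theorem~\ref{blow}, obtaining a finite-time blowup solution whose mass equals $\Vert\psi\Vert_2$ since the initial datum $\e^{-ia|x|^2/4}\psi$ has the same modulus as $\psi$. Your additional remarks on the $L^2$-conservation of the pseudoconformal transformation \eqref{pseudo} merely make explicit what the paper leaves implicit, so there is nothing to correct.
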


\begin{proof} Take $\ffi$ defined by \eqref{selfsim} with $u=\psi$, the ground state of
$\mathrm{(E}_1\mathrm{)}$.
\end{proof}

\begin{remark} 
{\rm
Note that \eqref{selfsim} yields blowup solutions with self-similar profiles corresponding to any solution of $\mathrm{(E}_1\mathrm{)}$. In particular, it follows by 
Theorem~\ref{global} that $\psi$ is the solution of $\mathrm{(E}_1\mathrm{)}$ with minimal $L^2$ norm, as is well-known in the case $b=0$. 
}
\end{remark}

\small

\end{document}